\newtheorem{theorem}{Theorem}[section]
\newtheorem{thm}[theorem]{Theorem}
\newtheorem{lem}[theorem]{Lemma}
\newtheorem{prop}[theorem]{Proposition}
\newtheorem{cor}[theorem]{Corollary}
\newtheorem{rem}[theorem]{Remark}
\newcommand{\Ann}{\mbox{Ann}\,}
\newcommand{\Hom}{\mbox{Hom}\,}
\newcommand{\Spec}{\mbox{Spec}\,}
\newcommand{\Ass}{\mbox{Ass}}
\newcommand{\Assh}{\mbox{Assh}\,}
\newcommand{\Att}{\mbox{Att}\,}
\newcommand{\Supp}{\mbox{Supp}\,}
\renewcommand{\dim}{\mbox{dim}\,}
\newcommand{\cd}{\mbox{cd}\,}
\newcommand{\Min}{\mbox{Min}\,}
\newcommand{\N}{\mathbb{N}}
\newcommand{\fa}{\mathfrak{a}}
\newcommand{\fm}{\mathfrak{m}}
\newcommand{\fr}{\mathfrak{r}}
\newcommand{\fp}{\mathfrak{p}}
\newcommand{\fq}{\mathfrak{q}}
\numberwithin{equation}{section}
\begin{document}

\title{On the  Attached prime ideals of local
cohomology modules defined by a pair of ideals}

\author[Z. Habibi]{Zohreh Habibi}
\address{Zohreh Habibi \\Payame Noor University, Po Box 19395-3697, Tehran, Iran}
\email{z\_habibi@pnu.ac.ir}

\author[M. Jahangiri]{Maryam Jahangiri }
\address{ Maryam Jahangiri\\Faculty of Mathematical Sciences and Computer,  Kharazmi
University,  Tehran, Iran AND  Institute for Research in Fundamental Sciences (IPM)  P. O.  Box: 19395-5746, Tehran, Iran.} \email{jahangiri@khu.ac.ir}

\author[Kh. Ahmadi Amoli]{Khadijeh Ahmadi Amoli}
\address{ Khadijeh Ahmadi Amoli\\Payame Noor University, Po Box 19395-3697, Tehran, Iran}

\email{khahmadi@pnu.ac.ir} \subjclass[2010]{ Primary: 13D45;
Secondary: 13E05, 13E10.} \keywords{ local cohomology modules with
respect to a pair of ideals, attached prime ideals, co-localization}

\thanks{The second author was in part supported by a grant from IPM (No.
92130111)}

\maketitle

\maketitle

\begin{center}
\end{center}

\begin{abstract}  Let $I$ and $J$ be two ideals of a commutative Noetherian ring $R$ and $M$
be an $R$-module of dimension $d$. If $R$ is a complete local ring and
$M$ is  finite, then attached prime ideals of $H^{d-1}_{I,J}(M)$
are computed by means of the concept of co-localization. Also, we
illustrate the attached prime  ideals of $H^{t}_{I,J}(M)$ on a
non-local ring $R$, for $t= \dim M$ and $t=  \cd(I,J,M)$.
\end{abstract}

\vskip 0.2 true cm


\pagestyle{myheadings}
\markboth{\rightline {\scriptsize  Habibi, Jahangiri and Ahmadi
Amoli}}
         {\leftline{\scriptsize On the set of Attached prime...}}

\bigskip
\bigskip


\section{\bf Introduction}

Throughout this paper, $R$ denotes a commutative
Noetherian ring, $M$ an $R$-module and $I$ and $J$ stand for two
ideals of $R$.  For all $i\in \N_0$ the $i$-th
local cohomology functor with respect to $(I,J)$, denoted by
$H^{i}_{I,J}(-)$, defined by Takahashi et. all in \cite{TAK} as the
$i$-th right derived functor of the $(I,J)$- torsion functor $\Gamma
_{I,J}(-)$, where $$\Gamma _{I,J}(M):=\{x \in M : I^{n}x\subseteq Jx
\  \text {for} \   n\gg 1\}.$$ This notion coincides  with  the
ordinary local cohomology functor $H^{i}_{I }(-)$ when $J=0$, see
\cite{B-SH}.

The main motivation for this generalization comes from the study of
a dual of ordinary local cohomology modules $H^{i}_{I }(M)$
(\cite{sch}). Basic facts and more information about local
cohomology defined by a pair of ideals can be obtained from
\cite{TAK}, \cite{CH} and \cite{CH-W}.

The second section of this paper is devoted to study the attached prime ideals of local cohomology
modules with respect to a pair of ideals by means of
co-localization. The concept of co-localization introduced by
Richardson in \cite{RICH}.

Let $(R,\fm)$ be local and $M$ be a
finite $R$-module of dimension $d$. If $c$ is a non-negative integer
such that $H^{i}_{I,J}(R) = 0$ for all $i> c$ and $H^{c}_{I,J} (R)$
is representable, then we illustrate the attached prime ideals of
$^{\fp}H^{c}_{I,J}(M)$ (see Theorem \ref{th mainI}). In addition if
$R$ is complete, then we have made use of Theorem \ref{th mainI} to
prove that in a special case
$$\Att(H^{d-1}_{I,J}(M)) \subseteq T \cup \Assh(M)\ \  \text{and} \  \  T\subseteq
\Att(H^{d-1}_{I,J}(M)),$$ where $$T=\{\fp \in \Supp(M) : \dim M/\fp M
= d -1, J\subseteq \fp \  and \ \sqrt{I + \fp }= \fm\},$$ (see Theorem
\ref{th mainII}).

In \cite[Theorem 2.1]{CH} the set of  attached
prime ideals of $H^{dim M}_{I,J}(M)$ was computed on a local ring. We generalize
this theorem to the non-local case. Also, the authors in
\cite[2.4]{D-YII} specified a subset of attached prime ideals of
ordinary top local cohomology module  $H^{cd(I, M)}_{I }(M)$. We improve it
for $H^{cd(I,J,M)}_{I,J}(M)$ over a not necessarily local ring,
where $\cd(I,J,M)= \sup \{i\in \mathbb{N}_{0}: H^{i}_{I,J}(M)\neq 0
\}$ with the convention that $\cd(I,M)=\cd(I,0,M)$.


\section{\bf Attached prime ideals}
\vskip 0.4 true cm

In this section we study the set of attached prime ideals of
local cohomology modules with respect to a pair of ideals.
\begin{rem}\label{ric}

\emph{Following \cite{RICH},  for a multiplicatively
closed subset $S$ of the local ring $(R,\fm)$, the co-localization
of $M$ relative to $S$ is defined to be the $S^{-1}R$-module
$S_{-1}(M):=D_{S^{-1}R}(S^{-1}D_{R}(M))$, where $D_R(-)$ is the Matlis
dual functor $\Hom_R(- , E_R(R/\fm))$. If $S = R \setminus \fp $ for some} \emph{$\fp \in
\Spec(R)$}\emph{, we write $^{\fp}M$ for $S_{-1}(M)$.}

\emph{Richardson in \cite[2.2]{RICH} proved that if $M$ is a
representable $R$- module, then so is $S_{-1}(M)$ and}
\emph{$\Att(S_{-1}M) = \{S^{-1}\fp : \fp \in
\Att(M)\}.$} \emph{Therefore, in order to get some results about
attached prime ideals of a module, it is convenient to study the
attached prime ideals of the co-localization of it.}\end{rem}


 \begin{lem}\label{lem befor main}
Let $(R,\fm)$ be a local ring, $\fa$ be an ideal of $R$ and $\fp\in
\Spec(R)$ with $\fa\subseteq \fp$. Let $R'=R/\fa$ and
$\fp'=\fp/\fa$. Then for any $R'$-module $X$ and $R'_{\fp'}$-module $Y$,
the following isomorphisms hold:

 $(i)$ \emph{$D_{R}(X)\cong D_{R'}(X) $} as
$R$-modules.

$(ii)$ \emph{$D_{R}(X)_{\fp}\cong D_{R'}(X)_{\fp'}$} as
$R_{\fp}$-modules.

$(iii)$ \emph{$D_{R_{\fp}}(Y) \cong D_{R'_{\fp'}}(Y)$} as
$R_{\fp}$-modules.

\end{lem}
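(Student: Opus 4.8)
The plan is to establish the three Matlis-duality isomorphisms by exploiting the standard change-of-rings behaviour of the functor $\Hom_R(-,E_R(R/\fm))$ when passing to the quotient $R'=R/\fa$. The crucial observation is that, since $\fa\subseteq\fp$ and $(R,\fm)$ is local, the injective hull $E_R(R/\fm)$ relates to $E_{R'}(R'/\fm')$ (where $\fm'=\fm/\fa$) via the submodule of $\fa$-annihilated elements: concretely, $\Hom_R(R',E_R(R/\fm))\cong E_{R'}(R'/\fm')$, because the Matlis dual over a complete (or merely local) ring interacts with quotients through $(0:_E\fa)$, which is precisely the injective hull of the residue field over $R'$.

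For part $(i)$ I would argue as follows. For an $R'$-module $X$, every $R$-homomorphism $X\to E_R(R/\fm)$ factors through $(0:_{E_R(R/\fm)}\fa)$ since $\fa$ annihilates $X$; hence
\[
D_R(X)=\Hom_R(X,E_R(R/\fm))\cong\Hom_R\bigl(X,(0:_{E}\fa)\bigr)\cong\Hom_{R'}\bigl(X,E_{R'}(R'/\fm')\bigr)=D_{R'}(X),
\]
where the middle step uses the adjunction $\Hom_R(X,\Hom_R(R',E))\cong\Hom_{R'}(X,\Hom_R(R',E))$ (Hom-tensor adjunction with $X=R'\otimes_{R'}X$) together with the identification of $(0:_E\fa)=\Hom_R(R',E)$ with $E_{R'}(R'/\fm')$. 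One must check this is an isomorphism of $R$-modules, which is routine since the $R$-action on the right factors through $R'$.

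For part $(ii)$ the idea is to localize the isomorphism of $(i)$ at $\fp$. Since localization is flat and commutes with $\Hom$ when the source module is finitely presented, some care is needed because $X$ need not be finite; however, the cleaner route is to observe that localization of a Matlis dual at $\fp$ agrees on both sides after identifying $R_\fp/\fa R_\fp\cong R'_{\fp'}$ and noting $\fp R_\fp$ corresponds to $\fp' R'_{\fp'}$. So I would apply $(-)_\fp$ to the $R$-isomorphism $D_R(X)\cong D_{R'}(X)$ and then reinterpret the right-hand side as an $R'_{\fp'}$-module, using that the $R_\fp$-structure factors through $R'_{\fp'}$. Part $(iii)$ is then the local analogue of $(i)$: I would simply apply $(i)$ with $R$, $\fa$, $\fp$ replaced by their localizations $R_\fp$, $\fa R_\fp$, and the $R_\fp$-module $Y$, using that $E_{R_\fp}(R_\fp/\fp R_\fp)$ plays the role of the injective hull and that $R_\fp/\fa R_\fp\cong R'_{\fp'}$.

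The main obstacle I anticipate is bookkeeping the injective hulls correctly under change of rings and localization—specifically verifying the identification $\Hom_R(R',E_R(R/\fm))\cong E_{R'}(R'/\fm')$ and the analogous statement after localizing, which is where subtleties about whether $R$ is complete could in principle enter. Since the lemma is stated without a completeness hypothesis, I expect the proof to rely only on the general fact that $(0:_E\fa)$ is the injective hull of the residue field over the quotient ring, a standard consequence of Matlis theory that holds over any local ring, so the difficulty is purely in presenting the adjunction isomorphisms cleanly rather than in any deep new input.
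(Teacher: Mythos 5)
The paper states this lemma without proof, so there is no argument of the authors' to compare against; judged on its own, your proof is correct. The ingredients you use are all standard and correctly deployed for an arbitrary local ring: $(0:_{E_R(R/\fm)}\fa)=\Hom_R(R',E_R(R/\fm))$ is an injective hull of the residue field over $R'$ (no completeness needed), an $R$-linear map between $\fa$-annihilated modules is automatically $R'$-linear (which makes your adjunction step in $(i)$ go through), localizing an $R'$-module at $\fp$ over $R$ is the same as localizing at $\fp'$ over $R'$ (so $(ii)$ is indeed just $(i)$ localized, with no finite-presentation issue since you localize the already-established isomorphism rather than trying to commute $(-)_\fp$ past $\Hom$), and $(iii)$ is $(i)$ applied to the local ring $R_\fp$ with ideal $\fa R_\fp$ and quotient $R_\fp/\fa R_\fp\cong R'_{\fp'}$.
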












In \cite[2.1 and 2.2]{EGH} the following theorems have been proved
for the attached prime ideals of $H^{d}_{I}(R)$ and $H^{d-1}_{I}(R)$ where $d=\dim R$.
 Here, we
generalize these theorems for the  local cohomology modules of $M$ with respect to a pair
of ideals when $M$ is a finite $R$-module  with $\dim
M=d$.

\begin{thm}\label{th mainI}
Let $(R,\fm)$ be a local ring, $M$ be a finite $R$-module, and
\emph{$\fp \in \Spec (R)$}. Assume that $c=cd(I,J,R)$ and
$H^{c}_{I,J} (R)$ is representable. Then

\begin{enumerate}
\item $\Att_{R_\fp}(^{\fp}H^{c}_{I,J}(M)) \subseteq \{\fq R_\fp : \dim M/\fq M
\geq c$, $\fq \subseteq \fp$, and $\fq \in \Spec (R)\}$.
\item If $R$ is complete, then
\begin{eqnarray*}
\Att_{R_\fp}(^{\fp}H^{dim M}_{I,J}(M)) =& \{&\fq R_\fp : \fq\in \Supp(M),\dim M/\fq M = \dim M ,
J\subseteq \fq\subseteq \fp,\\ & &and \sqrt{I + \fq} = \fm\}.
\end{eqnarray*}
\end{enumerate}
\end{thm}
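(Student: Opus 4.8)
The plan is to reduce both statements, via Remark~\ref{ric}, to a computation of the ordinary attached primes $\Att_R(H^{c}_{I,J}(M))$ over $R$ and then apply Richardson's formula ${}^{\fp}N\rightsquigarrow\Att_{R_\fp}({}^{\fp}N)=\{\fq R_\fp:\fq\in\Att_R(N),\ \fq\subseteq\fp\}$: a prime $\fq\not\subseteq\fp$ contributes the improper ideal $R_\fp$ and is discarded automatically, which is the source of the constraint $\fq\subseteq\fp$ in both parts. Before this can be used I must know the relevant module is representable. For (1), since $c=\cd(I,J,R)$ and $\fq M$ is finite, the vanishing $H^{c+1}_{I,J}(-)=0$ on finite modules makes $H^{c}_{I,J}(-)$ right exact in the top degree; applying it to a surjection $R^{n}\twoheadrightarrow M$ gives a surjection $H^{c}_{I,J}(R)^{n}\twoheadrightarrow H^{c}_{I,J}(M)$, so representability of $H^{c}_{I,J}(R)$ passes to $H^{c}_{I,J}(M)$. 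For (2), over the complete ring the top module $H^{\dim M}_{I,J}(M)$ is Artinian, hence representable.

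For (1) it then suffices to show $\fq\in\Att_R(H^{c}_{I,J}(M))\Rightarrow\dim M/\fq M\ge c$. Since $H^{c+1}_{I,J}(\fq M)=0$, the sequence $0\to\fq M\to M\to M/\fq M\to 0$ yields a natural isomorphism $H^{c}_{I,J}(M/\fq M)\cong H^{c}_{I,J}(M)/\fq H^{c}_{I,J}(M)$. For any representable $N$ and any $\fq\in\Att_R(N)$ one has $N/\fq N\neq 0$ (pass to the $\fq$-secondary quotient $\bar N$ of a minimal secondary representation, where $\fq^{k}\bar N=0$ forces $\fq\bar N\neq\bar N$). Hence $H^{c}_{I,J}(M/\fq M)\neq 0$, and the Grothendieck-type vanishing $H^{i}_{I,J}(-)=0$ for $i>\dim(-)$ forces $\dim M/\fq M\ge c$; Richardson's formula then finishes (1).

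For (2) set $d=\dim M$; as $H^{d}_{I,J}(M)\neq 0$ we have $\cd(I,J,M)=d$, and the mechanism of (1) (now with $H^{d+1}_{I,J}(\fq M)=0$ by Grothendieck vanishing) again gives $H^{d}_{I,J}(M/\fq M)\cong H^{d}_{I,J}(M)/\fq H^{d}_{I,J}(M)$. The decisive observation is a base change: if $J\subseteq\fq$ then $J(M/\fq M)=0$, so $\Gamma_{I,J}(-)=\Gamma_{I}(-)$ on $R/\fq$-modules, and if moreover $\sqrt{I+\fq}=\fm$ then $\sqrt{I(R/\fq)}=\fm(R/\fq)$; writing $R'=R/\fq$ this gives $H^{d}_{I,J}(M/\fq M)\cong H^{d}_{\fm}(M/\fq M)$ as $R'$-modules, reducing the pair cohomology to ordinary $\fm$-adic cohomology over the complete local domain $R'$.

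With this reduction the two inclusions follow. For $\supseteq$: given $\fq$ with $\fq\in\Supp M$, $\dim M/\fq M=d$, $J\subseteq\fq\subseteq\fp$ and $\sqrt{I+\fq}=\fm$, note $\Ann M\subseteq\fq$, so $\dim R/\fq=\dim M/\fq M=d$ and $M/\fq M$ is faithful over the domain $R'$; by Macdonald--Sharp $\Att_{R'}(H^{d}_{\fm}(M/\fq M))=\Assh_{R'}(M/\fq M)\ni(0)$, and transporting this back along $H^{d}_{I,J}(M)\twoheadrightarrow H^{d}_{I,J}(M/\fq M)$ yields $\fq\in\Att_R(H^{d}_{I,J}(M))$. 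For $\subseteq$: if $\fq\in\Att_R(H^{d}_{I,J}(M))$, the mechanism of (1) gives $\fq\in\Supp M$ and, since $\Ann M\subseteq\fq$, $\dim R/\fq=\dim M/\fq M=d$; the remaining conditions $J\subseteq\fq$ and $\sqrt{I+\fq}=\fm$ are extracted from $H^{d}_{I,J}(M/\fq M)\neq 0$ by invoking the Lichtenbaum--Hartshorne vanishing theorem for a pair of ideals over $R'$, whose only possible $\Assh$-witness is the prime $(0)$. I expect this last point---that non-vanishing of the top pair cohomology forces precisely $J\subseteq\fq$ and $\sqrt{I+\fq}=\fm$, i.e.\ the pair analogue of Lichtenbaum--Hartshorne---to be the main obstacle; once it is available, Richardson's formula packages the answer in the stated co-localized form.
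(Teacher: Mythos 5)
Your proposal is correct and follows essentially the same route as the paper: Richardson's co-localization formula, right-exactness of $H^{c}_{I,J}(-)$ in the top degree giving $H^{c}_{I,J}(M/\fq M)\cong H^{c}_{I,J}(M)/\fq H^{c}_{I,J}(M)$ for part (1), reduction to $H^{d}_{\fm}(M/\fq M)$ over the complete domain $R/\fq$ for the inclusion $\supseteq$ of part (2), and Chu's pair-of-ideals Lichtenbaum--Hartshorne criterion (\cite[Theorem 2.4]{CH}) for $\subseteq$. The only minor variation is in $\supseteq$: you land $\fq$ in $\Att(H^{d}_{I,J}(M))$ via Macdonald--Sharp over $R/\fq$ and the epimorphism $H^{d}_{I,J}(M)\twoheadrightarrow H^{d}_{I,J}(M/\fq M)$, whereas the paper uses $\Att(H^{d}_{I,J}(M/\fq M))=\Att(H^{d}_{I,J}(M))\cap V(\fq)\neq\emptyset$ and then invokes part (1) to rule out any attached prime strictly containing $\fq$.
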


\begin{proof}

 $(1)$ Let $\fq R_\fp\in \Att_{R_\fp} (^{\fp}H^{c}_{I,J}(M))$. By
\cite[3.1]{T-T-Y} and Remark $\ref{ric}$, we have
 $H^{c}_{I,J}(M)$ is representable and $\Att_{R_\fp} (^{\fp}H^{c}_{I,J}(M)) = \{\fq R_\fp : \fq\in
\Att(H^{c}_{I,J}(M)) \ and \ \fq\subseteq \fp\}$. Also, using
\cite[6.1.8]{B-SH} and \cite[2.11]{AGH-MEL}

 $$\begin{array}{ll}\Att(H^{c}_{I,J}(M/\fq M))&= \Att(H^{c}_{I,J}(M))\cap
\Supp(R/\fq).\end{array}$$ This implies that $H^{c}_{I,J}(M/\fq
M)\neq 0$ and consequently $\dim M/\fq M\geq c$.

 $(2)$ Let $\fp\in \Supp(M)$. Put $d:=\dim M$, $\overline{R} = R/\Ann_{R}M$, and $$T:= \{\fq R_\fp : \fq\in \Supp(M),\dim M/\fq M=d, J\subseteq \fq\subseteq \fp \  and \  \sqrt{I + \fq} = \fm\}.$$

Since $\dim_{\overline{R}}M=\dim_{R}M$, \cite[2.7]{TAK} and Lemma
$\ref{lem befor main}$ imply that
$^{\overline{\fp}}H^{d}_{I\overline{R},J\overline{R}}(M)\cong$ $
^{\fp}H^{d}_{I,J}(M)$, as $R_{\fp}$-modules. Therefore, by
\cite[8.2.5]{B-SH}, $\fq\in
 \Att_{\overline{R}_{\overline{\fp}}}(^{\overline{\fp}}H^{d}_{I\overline{R},J\overline{R}}(M))$
 if and only if $$\fq\cap R_{\fp}\in \Att_{R_{\fp}}(^{\overline{\fp}}H^{d}_{I\overline{R},J\overline{R}}(M))
= \Att_{R_\fp}(^{\fp}H^{d}_{I,J}(M)).$$

Now, without loss of generality, we may assume that $M$ is faithful
and $\dim R = d$.
 If $H^{d}_{I,J}(M)=0$, then
$\Att_{R_{\fp}}(^{\fp}H^{d}_{I,J}(M))=\emptyset$. Assume that $T\neq
\emptyset$ and $\fq R_{\fp}\in T$. Since $\dim M/\fq M = \dim R$, we
have $\dim R/\fq= d$. On the other hand, $\fq\in \Supp (M/JM)$.
Thus, by \cite[Theorem 2.4]{CH}, $\dim R/(I + \fq)> 0$ which
contradicts $\sqrt{I + \fq} = \fm$. So $T=\emptyset$.

Now, we assume that  $H^{d}_{I,J}(M)\neq0$.

  $\supseteq$: Let $\fq R_{\fp}\in T$. Since $H^{d}_{I,J}(M)$ is an Artinian $R$-module (cf. \cite[2.1]{CH-W}) so,  by Remark $\ref{ric}$, it is enough to show that $\fq\in \Att(H^{d}_{I,J}(M))$.
As $M/\fq M$ is $J$-torsion with dimension $d$ and $\sqrt{I + \fq}=
\fm$ , so by \cite[4.2.1 and 6.1.4]{B-SH}.
$$H^{d}_{I,J}(M/\fq M)\cong H^{d}_{I}(M/\fq M)\cong
H^{d}_{I(R/\fq)}(M/\fq M)\cong H^{d}_{\fm/\fq}(M/\fq M)\neq 0.$$
Hence \cite[6.1.8]{B-SH} and \cite[2.11]{AGH-MEL} imply that
$\emptyset\neq \Att(H^{d}_{I,J}(M/\fq M))= \Att(H^{d}_{I,J}(M)) \cap
\Supp(R/\fq).$
 Let $\fq_{0}\in \Att(H^{d}_{I,J}(M))$ be
such that $\fq\subset \fq _{0}$. So that $\dim M/\fq_{0}M < d$. On
the other hand, by Remark $\ref{ric}$, $\fq_{0}R_{\fq_0} \in
\Att_{R_{\fq_0}} (^{\fq_0}H^{d}_{I,J}(M))$ and this implies that
$\dim M/\fq_{0}M \geq d$ which is a contradiction. So $\fq= \fq
_{0}$.

$\subseteq$: Let $\fq R_{\fp}\in \Att_{R_\fp}
(^{\fp}H^{d}_{I,J}(M))$. As we have seen in the proof of part $(1)$,
$\dim M/\fq M = d$ and $\fq\subseteq \fp$. So by \cite[2.7]{TAK},
$$H^{d}_{IR/\fq,JR/\fq}(M/\fq M)\cong H^{d}_{I,J}(M/\fq M)\neq 0.$$

Now, by  \cite[Theorem 2.4]{CH}, there exists $\fr/\fq \in \Supp(R/\fq\otimes_{R/\fq}
\frac{M/\fq M}{(JR/\fq)(M/\fq M)})$ such that $\dim
\frac{R/\fq}{\fr/\fq}=d$ and $\dim \frac{R/\fq}{IR/\fq+\fr/\fq}=0 $.
Since $\fq R_{\fp} \in \Att_{R_{\fp}} (^{\fp}H^{d}_{I,J}(M))$, we
have $\fq \in \Att(H^{d}_{I,J}(M))$ and so $\fq\in \Supp(M)\cap
V(J)$. Hence $\fq/\fq\in \Supp_{R/\fq} (M/\fq M)$ and then
$$\dim R/\fq= \dim M/\fq M= d= \dim \frac{R/\fq}{\fr/\fq}=\dim R/\fq
.$$

Therefore, $\dim R/\fq = \dim R/\fr $ which shows that $\fq=\fr$.
Thus $\sqrt{I+\fq}=\fm$.
\end{proof}

\begin{rem}
\emph{The inclusion in Theorem $\ref{th mainI}$(1)  is not an equality
in general. Let the assumption be as in Theorem $\ref{th mainI}$.
Assume that $H^{d}_{I,J}(M)=0$, $\fp\in \Min(M)$ and $\dim M/\fp
M=d$. Then }\emph{$\Att_{R_\fp}
(^{\fp}H^{d}_{I,J}(M))=\emptyset$}\emph{. But}
$$\emph{$\{\fq R_\fp : \dim M/\fq M = d,\fq \subseteq \fp$ and
$\fq\in \Supp (M)\} = \{\fp R_{\fp}\}$.}$$

\end{rem}
\begin{thm}\label{th mainII}
Let $(R,\fm)$ be a complete local ring and $M$ be a finite
$R$-module with dimension $d$. Assume that $H^{i}_{I,J}(R) = 0$ for
all $i
> d-1$ and $H^{d-1}_{I,J} (R)$ is representable. Then
\begin{enumerate}
\item
\begin{eqnarray*}
\Att_R (H^{d-1}_{I,J}(M)) \subseteq &\{& \fp \in \Supp(M) :
\dim M/\fp M = d -1, J\subseteq \fp  \  and  \  \sqrt{I+\fp} =
\fm\} \\  & & \cup \Assh(M).
\end{eqnarray*}
\item
$$
\{\fp \in \Supp(M) : \dim M/\fp M = d
-1, J\subseteq \fp \  and  \  \sqrt{I + \fp }= \fm\}\subseteq \Att
(H^{d-1}_{I,J}(M)).$$
\end{enumerate}
\end{thm}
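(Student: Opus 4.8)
The plan is to track a prime $\fp \in \Att_R(H^{d-1}_{I,J}(M))$ through the co-localization result Theorem \ref{th mainI}(1) together with the top-cohomology non-vanishing criterion \cite[Theorem 2.4]{CH}, after recording a few preliminaries. We may assume $H^{d-1}_{I,J}(M) \neq 0$, since otherwise both inclusions are trivial (for (2) one checks the displayed set is empty, as each of its members would force $H^{d-1}_{I,J}(M/\fp M)\neq 0$ and hence $H^{d-1}_{I,J}(M)\neq 0$). Because $M$ is finite one has $\cd(I,J,M) \le \cd(I,J,R) \le d-1$ (pass to a free presentation and take direct limits of ordinary local cohomology over the defining system of ideals), so $H^i_{I,J}(M)=0$ for $i>d-1$, and the assumption $H^{d-1}_{I,J}(M)\neq 0$ forces $\cd(I,J,R)=d-1$; thus Theorem \ref{th mainI} applies with $c=d-1$, and \cite[3.1]{T-T-Y} turns the hypothesis into representability of $H^{d-1}_{I,J}(M)$. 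Finally, any $\fp \in \Att(H^{d-1}_{I,J}(M))$ lies in $\Supp M$ (since $\Ann M \subseteq \Ann H^{d-1}_{I,J}(M)$), hence is the unique minimal prime of $\Supp(M/\fp M)$, so $\dim M/\fp M = \dim R/\fp$; applying Theorem \ref{th mainI}(1) to the co-localization at $\fp$ gives $\dim M/\fp M \ge d-1$, i.e. $\dim R/\fp \in \{d-1,d\}$.

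For part (1) I would split on this dimension. If $\dim R/\fp = d$, then $\fp \in \Supp M$ has maximal coheight; since $R$ is complete, hence catenary with $R/\fp$ an equidimensional domain, no prime strictly below $\fp$ can stay in $\Supp M$, so $\fp \in \Min M \subseteq \Ass M$ and therefore $\fp \in \Assh M$. If $\dim R/\fp = d-1$, I use $\Att(H^{d-1}_{I,J}(M/\fp M)) = \Att(H^{d-1}_{I,J}(M)) \cap V(\fp)$ from \cite[6.1.8]{B-SH} and \cite[2.11]{AGH-MEL}, which contains $\fp$, so $H^{d-1}_{I,J}(M/\fp M)\neq 0$. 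As $\dim_{R/\fp}(M/\fp M)=d-1$, this is a top local cohomology over $R/\fp$, so by \cite[2.7]{TAK} and \cite[Theorem 2.4]{CH} there is $\fr \supseteq \fp$ with $\fr \in \Supp(M/(\fp M+JM))$, $\dim R/\fr = d-1$ and $\sqrt{I+\fr}=\fm$. Completeness then forces $\fr=\fp$, giving $\sqrt{I+\fp}=\fm$; and localizing $M/(\fp M+JM)$ at the generic point of the domain $R/\fp$, the membership $\fp\in\Supp(M/(\fp M+JM))$ is incompatible with $J\not\subseteq\fp$, so $J\subseteq\fp$. Hence $\fp$ lies in the set $T$ of the statement.

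For part (2) I would reverse the construction. Given $\fp$ in $T$, the module $M/\fp M$ is $J$-torsion (because $J\subseteq\fp$), so by \cite[4.2.1 and 6.1.4]{B-SH} and $\sqrt{I+\fp}=\fm$ one has $H^{d-1}_{I,J}(M/\fp M)\cong H^{d-1}_{\fm/\fp}(M/\fp M)$, which is nonzero by Grothendieck non-vanishing since $\dim_{R/\fp}(M/\fp M)=d-1$. The same identity $\Att(H^{d-1}_{I,J}(M/\fp M)) = \Att(H^{d-1}_{I,J}(M)) \cap V(\fp)$ is then nonempty; any member $\fp_0$ of it satisfies $\fp\subseteq\fp_0$, lies in $\Att(H^{d-1}_{I,J}(M))$, and by Theorem \ref{th mainI}(1) obeys $\dim R/\fp_0 = \dim M/\fp_0 M \ge d-1$. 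Since $\fp\subseteq\fp_0$ gives $\dim R/\fp_0 \le \dim R/\fp = d-1$, the dimensions agree and, by completeness, $\fp_0=\fp$; thus $\fp\in\Att(H^{d-1}_{I,J}(M))$.

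The step I expect to be the main obstacle — and the only place completeness is genuinely used — is the repeated implication ``$\fp\subseteq\fr$ (resp. $\fp\subseteq\fp_0$) with $\dim R/\fp=\dim R/\fr$ forces $\fp=\fr$.'' Here I would invoke that a complete local ring is catenary and that $R/\fp$ is consequently an equidimensional complete local domain, so a strict inclusion of primes strictly lowers the dimension of the quotient; without completeness one could have $\fr\supsetneq\fp$ of equal coheight and both identifications would collapse. The rest is bookkeeping: verifying $\dim M/\fp M = \dim R/\fp$ throughout, confirming that $d-1$ is really the top cohomological degree for the quotient modules so that \cite[Theorem 2.4]{CH} and the surjectivity underlying the attached-prime formula both apply, and carrying out the generic-point localization that extracts $J\subseteq\fp$.
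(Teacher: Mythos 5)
Your argument is correct and follows the paper's proof in all essentials: part (1) is the same case split on $\dim M/\fp M\in\{d-1,d\}$ via Theorem \ref{th mainI}(1), the identity $\Att(H^{d-1}_{I,J}(M/\fp M))=\Att(H^{d-1}_{I,J}(M))\cap V(\fp)$ and \cite[Theorem 2.4]{CH}, while in part (2) you merely inline the proof of Theorem \ref{th mainI}(2) (Grothendieck non-vanishing of $H^{d-1}_{\fm/\fp}(M/\fp M)$ followed by the maximality argument pinning down the attached prime) where the paper instead cites that theorem for $M/\fp M$ and finishes with the epimorphism $H^{d-1}_{I,J}(M)\rightarrow H^{d-1}_{I,J}(M/\fp M)$. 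One misdiagnosis worth correcting: the step ``$\fp\subseteq\fr$ and $\dim R/\fp=\dim R/\fr$ force $\fp=\fr$'' holds in any Noetherian ring (prepend $\fp$ to a saturated chain above $\fr$) and needs neither catenarity nor completeness; completeness is actually consumed in applying \cite[Theorem 2.4]{CH} directly over $R/\fp$ and, on the paper's route, in Theorem \ref{th mainI}(2).
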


\begin{proof}
$(1)$ First we note that, by \cite[4.8]{TAK} and \cite[3.1]{T-T-Y},
$H^{d-1}_{I,J}(M)$ is representable and $\Att
(H^{d-1}_{I,J}(M))\subseteq \Supp (M)$. Now, let $\fp \in
\Att(H^{d-1}_{I,J}(M))$. Since $\fp R_\fp \in \Att_{R_\fp}
(^{\fp}H^{d-1}_{I,J}(M))$, by Theorem $\ref{th mainI}$ $(1)$, $\dim
M/\fp M \geq d-1$.

If $\dim M/\fp M=d$, then $\dim R/\fp=d$ and so $\fp \in \Assh(M)$.

Now, assume that $\dim M/\fp M = d-1$. Since $\fp \in
\Att(H^{d-1}_{I,J}(M))$, $H^{d-1}_{IR/\fp,JR/\fp}(M/\fp M)\cong
H^{d-1}_{I,J}(M/\fp M)\neq 0$. Thus, by \cite[Theorem 2.4]{CH}, there exists $\fr/\fp
\in \Supp(\frac{M/\fp M}{(JR/\fp)(M/\fp M)})$ such that $\dim
\frac{R}{\fr}=d$ and
 $\dim \frac{R}{I+\fr}=0 $. Hence $\fr=\fp , J\subseteq \fp,$ and $\sqrt{I+\fp}=\fm$.

 $(2)$ Let $\fp\in \Supp(M)$, $J\subseteq \fp$, $\dim M/\fp M = d-1$, and
$\sqrt{I + \fp}=\fm$. Then, by \cite[3.1]{T-T-Y} and Theorem
$\ref{th mainI}$ $(2)$,  $H^{d-1}_{I,J}(M)$ is representable, $\fp
R_\fp \in \Att_{R_\fp} (^{\fp}H^{d-1}_{I,J}(M/\fp M))$, and so
$\fp\in \Att (H^{d-1}_{I,J}(M/\fp M))$. Now, the proof is complete by considering the epimorphism
\\$H^{d-1}_{I,J}(M)\rightarrow H^{d-1}_{I,J}(M/\fp M)$.

\end{proof}
In the rest of the paper, following \cite{TAK},  we use the notations
$$W(I,J):= \{\fp\in Spec(R): I^{n}\subseteq \fp + J\
 for \  an \  integer \ n\geq1\}$$ and
$$\widetilde{W}(I,J):= \{\fa: \fa \  is \  an \  ideal \  of \ R; I^{n}\subseteq \fa+J \ for \ an \  integer \ n\geq
1\}.$$
 The following lemma can be proved using \cite[3.2]{TAK}.
\begin{lem}\label{supp}
For any non-negative integer $i$ and $R$-module $M$,

$(i)$ \emph{$\Supp(H^{i}_{I,J}(M))\subseteq \underset{\fa \in
\widetilde{W}(I,J)}{\bigcup}\Supp(H^{i}_{\fa}(M))$}.

$(ii)$ \emph{$\Supp(H^{i}_{I,J}(M))\subseteq \Supp(M) \cap W(I,J)$}.

\end{lem}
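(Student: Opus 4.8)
The plan is to prove Lemma~\ref{supp} by reducing both containments to the analogous statements for ordinary local cohomology via the fundamental decomposition result \cite[3.2]{TAK}, which expresses $H^{i}_{I,J}(M)$ as a direct limit $\varinjlim_{\fa \in \widetilde{W}(I,J)} H^{i}_{\fa}(M)$ over the directed set $\widetilde{W}(I,J)$ of ideals. The key observation driving everything is that support is well-behaved under direct limits: since direct limits are exact and commute with localization, a prime $\fp$ lies in $\Supp(\varinjlim_\lambda N_\lambda)$ only if $(N_\lambda)_\fp \neq 0$ for some $\lambda$, so $\Supp(\varinjlim_\lambda N_\lambda) \subseteq \bigcup_\lambda \Supp(N_\lambda)$.

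For part $(i)$, I would apply this directly. Writing $H^{i}_{I,J}(M) \cong \varinjlim_{\fa \in \widetilde{W}(I,J)} H^{i}_{\fa}(M)$ by \cite[3.2]{TAK}, the support-under-limits inclusion gives
\[
\Supp(H^{i}_{I,J}(M)) \subseteq \bigcup_{\fa \in \widetilde{W}(I,J)} \Supp(H^{i}_{\fa}(M)),
\]
which is exactly the claim. This part is essentially immediate once the direct-limit presentation is invoked.

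For part $(ii)$, I would combine part $(i)$ with two standard facts about ordinary local cohomology to control each term in the union. First, $\Supp(H^{i}_{\fa}(M)) \subseteq \Supp(M)$ for every ideal $\fa$, since $H^{i}_{\fa}(M)$ is a subquotient of a complex built from $M$ (equivalently, $H^{i}_{\fa}(M)$ is $\fa$-torsion and supported within $\Supp(M)$); this shows the whole union sits inside $\Supp(M)$. Second, I must show each prime $\fp$ appearing belongs to $W(I,J)$. For this I would use that $H^{i}_{\fa}(M)$ is $\fa$-torsion, so $\Supp(H^{i}_{\fa}(M)) \subseteq V(\fa)$, meaning any $\fp$ in the support satisfies $\fa \subseteq \fp$; combined with $\fa \in \widetilde{W}(I,J)$, i.e. $I^{n} \subseteq \fa + J$ for some $n$, this forces $I^{n} \subseteq \fp + J$, which is precisely the condition $\fp \in W(I,J)$.

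The main obstacle to watch is the verification that $\fa \in \widetilde{W}(I,J)$ together with $\fa \subseteq \fp$ yields $\fp \in W(I,J)$: one needs $I^{n} \subseteq \fa + J \subseteq \fp + J$, and it must be checked that $W(I,J)$ is defined with the same quantifier structure (some $n \geq 1$) so that the implication is clean. Beyond this bookkeeping, the argument is a routine assembly of the direct-limit presentation with elementary support and torsion properties, and I would expect no deeper difficulty.
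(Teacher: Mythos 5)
Your proof is correct and follows exactly the route the paper indicates: the paper gives no written proof but states the lemma "can be proved using [TAK, 3.2]," i.e.\ the direct-limit presentation $H^{i}_{I,J}(M)\cong\varinjlim_{\fa\in\widetilde{W}(I,J)}H^{i}_{\fa}(M)$, which is precisely your starting point. The supporting facts you invoke (support of a direct limit lies in the union of supports, $H^{i}_{\fa}(M)$ is $\fa$-torsion and supported in $\Supp(M)$, and $\fa\subseteq\fp$ with $I^{n}\subseteq\fa+J$ gives $\fp\in W(I,J)$) are all valid and match the definitions in the paper.
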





\begin{cor}\label{att}
Let $M$ be an $R$-module and $c=cd(I,J,R)$. Assume that $M$ is
representable or $H^{c}_{I,J}(R)$ is finite. Then
\emph{$$\Att(H^{c}_{I,J}(M))\subseteq \Att(M)\cap W(I,J).$$}
\end{cor}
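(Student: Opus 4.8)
The plan is to establish the two inclusions $\Att(H^c_{I,J}(M)) \subseteq W(I,J)$ and $\Att(H^c_{I,J}(M)) \subseteq \Att(M)$ separately. First I would record that $H^c_{I,J}(M)$ is representable, so that the left-hand side is defined: this is exactly what the hypothesis delivers, since by \cite[3.1]{T-T-Y} the assumption $c=\cd(I,J,R)$ together with either representability of $M$ or finiteness of $H^c_{I,J}(R)$ forces $H^c_{I,J}(M)$ to be representable.

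The inclusion into $W(I,J)$ is immediate: for a representable module $N$ one has $\Att(N)\subseteq\Supp(N)$, because a $\fp$-secondary summand has support $V(\fp)\ni\fp$. Hence, by Lemma \ref{supp}$(ii)$, $\Att(H^c_{I,J}(M))\subseteq\Supp(H^c_{I,J}(M))\subseteq\Supp(M)\cap W(I,J)\subseteq W(I,J)$, with no extra work.

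The real point is $\Att(H^c_{I,J}(M))\subseteq\Att(M)$, and it rests on the right exactness of $H^c_{I,J}(-)$. Since $c=\cd(I,J,R)$, I would argue that $H^{c+1}_{I,J}(N)=0$ for every $R$-module $N$, so that the long exact sequence turns $H^c_{I,J}$ into a right exact functor; granting this, in the representable case I take a minimal secondary representation $M=\sum_{i=1}^{r}S_i$ with $S_i$ being $\fp_i$-secondary and $\Att(M)=\{\fp_1,\dots,\fp_r\}$. Applying the right exact, additive functor $H^c_{I,J}$ to the surjection $\bigoplus_i S_i\twoheadrightarrow M$ gives a surjection $\bigoplus_i H^c_{I,J}(S_i)\twoheadrightarrow H^c_{I,J}(M)$, and functoriality shows each $H^c_{I,J}(S_i)$ is $\fp_i$-secondary or zero (multiplication by $x\notin\fp_i$ remains surjective and by $x\in\fp_i$ remains nilpotent). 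Thus $H^c_{I,J}(M)$ is a sum of $\fp_i$-secondary-or-zero pieces and $\Att(H^c_{I,J}(M))\subseteq\{\fp_1,\dots,\fp_r\}=\Att(M)$.

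When instead $H^c_{I,J}(R)$ is finite, I would use that a right exact functor commuting with direct sums tensors with its value on $R$, yielding a natural isomorphism $H^c_{I,J}(M)\cong H^c_{I,J}(R)\otimes_R M$; tensoring a finite presentation $R^a\to R^b\to H^c_{I,J}(R)\to 0$ with $M$ then presents $H^c_{I,J}(M)$ as a homomorphic image of $M^{b}$, so that $\Att(H^c_{I,J}(M))\subseteq\Att(M^{b})=\Att(M)$ by the behaviour of attached primes under quotients. The step I expect to be the main obstacle is precisely the right exactness: one must check that $c=\cd(I,J,R)$ makes $H^{c+1}_{I,J}$ vanish on all modules and not only on $R$, which for a pair of ideals means first controlling $H^{c+1}_{I,J}$ through the ideals $\fa\in\widetilde{W}(I,J)$ via \cite[3.2]{TAK} and then invoking the ordinary bound $\cd(\fa,N)\le\cd(\fa,R)$ together with a dimension-shifting argument that terminates against the \v{C}ech vanishing bound.
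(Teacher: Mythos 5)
Your argument reaches the right conclusion and runs on the same engine as the paper's proof, namely the right exactness of $H^{c}_{I,J}(-)$ for $c=\cd(I,J,R)$, but you package it differently. The paper argues in one line: by \cite[4.8]{TAK} and \cite[3.1]{T-T-Y} one has $\Att(H^{c}_{I,J}(M))=\Att(M\otimes_{R}H^{c}_{I,J}(R))$ under either hypothesis, then \cite[2.11]{AGH-MEL} gives $\Att(M\otimes H^{c}_{I,J}(R))\subseteq \Att(M)\cap \Supp(H^{c}_{I,J}(R))$, and Lemma \ref{supp}$(ii)$ applied to $R$ gives $\Supp(H^{c}_{I,J}(R))\subseteq W(I,J)$. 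You instead prove the two inclusions separately, apply Lemma \ref{supp}$(ii)$ to $M$ rather than to $R$ (equally valid), and in the representable case replace the tensor formula by a direct secondary-representation argument (a right exact additive functor sends a $\fp$-secondary module to a $\fp$-secondary-or-zero module, and the induced surjection from $\bigoplus_i H^{c}_{I,J}(S_i)$ does the rest). This case split is more self-contained, avoiding the citation of \cite[2.11]{AGH-MEL}, at the cost of length; both versions ultimately lean on the same external input.

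The one place your sketch actually goes astray is the justification of the load-bearing step, the vanishing of $H^{i}_{I,J}(N)$ for all $i>c$ and \emph{all} $R$-modules $N$. You propose to deduce it from $H^{i}_{I,J}(N)\cong\varinjlim_{\fa\in\widetilde{W}(I,J)}H^{i}_{\fa}(N)$ together with the ordinary bound $\cd(\fa,N)\le\cd(\fa,R)$; but the hypothesis $c=\cd(I,J,R)$ only gives vanishing of the direct limit $\varinjlim_{\fa}H^{i}_{\fa}(R)$ for $i>c$, not of the individual terms $H^{i}_{\fa}(R)$, so the ordinary bound cannot be applied ideal by ideal. The correct route is the standard descending induction carried out for $H^{\bullet}_{I,J}$ itself (it commutes with direct sums, every module is a quotient of a free one, and one starts the induction from a uniform a priori vanishing bound), and this is precisely the content of the results \cite[4.8]{TAK} and \cite[3.1]{T-T-Y} that the paper cites. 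If you cite that fact rather than re-deriving it through $\widetilde{W}(I,J)$, your proof is complete.
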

\begin{proof}

By \cite[4.8]{TAK}, \cite[2.11]{AGH-MEL},\cite[3.1]{T-T-Y} and Lemma
\ref{supp} $(ii)$, we have
 $$\begin{array}{lll}\Att(H^{c}_{I,J}(M))&= \Att(M\otimes
H^{c}_{I,J}(R))&\subseteq \Att(M)\cap
\Supp(H^{c}_{I,J}(R))\\&&\subseteq \Att(M)\cap W(I,J).\end{array}$$
\end{proof}





Applying the set of attached prime ideals of top local cohomology module in
\cite[Theorem 2.2]{CH}, we obtain another presentation for it.
\begin{prop}\label{hat}

Let $(R,\fm)$ be a local ring and $\hat{R}$ denotes the $\fm-$adic
completion of $R$. Suppose that $M$ is a finite $R$-module of
dimension $d$. Then
\begin{eqnarray*}
\Att_{R}(H^{d}_{I,J} (M))=&\{&\fq\cap R : \fq \in
\Supp_{\hat{R}} (\hat{R}\otimes_{R}M/JM), \dim(\hat{R}/\fq) = d,\\
&&and \
  \dim\hat{R}/(I\hat{R} + \fq) = 0\}.
\end{eqnarray*}
\end{prop}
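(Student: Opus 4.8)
The plan is to reduce everything to the known formula for top local cohomology in \cite[Theorem 2.2]{CH} and then transport the attached-prime computation across the faithfully flat extension $R \to \hat{R}$. First I would recall the base result: \cite[Theorem 2.2]{CH} gives, for a complete local ring, a description of $\Att(H^{d}_{I,J}(M))$ as the set of primes $\fq$ satisfying $\fq \in \Supp(M/JM)$, $\dim(R/\fq) = d$, and $\dim R/(I + \fq) = 0$. Since $\hat{R}$ is complete local with $\dim_{\hat{R}}(\hat{R}\otimes_R M) = d$, I can apply that theorem directly over $\hat{R}$ to the finite $\hat{R}$-module $\hat{R}\otimes_R M$, obtaining
\begin{eqnarray*}
\Att_{\hat{R}}(H^{d}_{I\hat{R},J\hat{R}}(\hat{R}\otimes_R M)) = &\{&\fq \in \Supp_{\hat{R}}(\hat{R}\otimes_R M/JM) : \dim(\hat{R}/\fq) = d,\\ & & \text{and } \dim \hat{R}/(I\hat{R}+\fq) = 0\}.
\end{eqnarray*}

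Next I would connect the cohomology over $R$ with that over $\hat{R}$. Using \cite[4.8]{TAK} together with the flat base change for local cohomology with respect to a pair of ideals, one gets $\hat{R}\otimes_R H^{d}_{I,J}(M) \cong H^{d}_{I\hat{R},J\hat{R}}(\hat{R}\otimes_R M)$; here the key point is that $H^{d}_{I,J}(M)$ is Artinian by \cite[2.1]{CH-W}, so completion acts as the identity Matlis-dually and the module is unchanged as an $R$-module under the natural $\hat{R}$-structure. The remaining task is then purely about how attached primes behave under $R \to \hat{R}$ on an Artinian module.

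For that last step I would invoke the standard correspondence for Artinian modules over a complete-by-flat extension, namely \cite[8.2.4, 8.2.5]{B-SH}: if $A$ is an Artinian $R$-module, then $A$ acquires a natural $\hat{R}$-module structure and $\Att_R(A) = \{\fq \cap R : \fq \in \Att_{\hat{R}}(A)\}$. Applying this with $A = H^{d}_{I,J}(M)$ and combining with the two displays above yields exactly
\begin{eqnarray*}
\Att_R(H^{d}_{I,J}(M)) = &\{&\fq\cap R : \fq \in \Supp_{\hat{R}}(\hat{R}\otimes_R M/JM), \dim(\hat{R}/\fq) = d,\\ & & \text{and } \dim \hat{R}/(I\hat{R}+\fq) = 0\},
\end{eqnarray*}
which is the assertion. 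The main obstacle I anticipate is verifying the base-change isomorphism $\hat{R}\otimes_R H^{d}_{I,J}(M) \cong H^{d}_{I\hat{R},J\hat{R}}(\hat{R}\otimes_R M)$ cleanly in the pair-of-ideals setting; rather than proving flat base change from scratch, I would lean on \cite[4.8]{TAK} to write $H^{d}_{I,J}(M) \cong M \otimes_R H^{d}_{I,J}(R)$ (when $d = \cd$), or more safely invoke the independence/isomorphism results already recorded for $H^{\dim M}_{I,J}$ so that the Artinian structure and its attached primes are manifestly compatible with completion.
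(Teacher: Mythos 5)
Your overall architecture (compute over $\hat{R}$ via Chu's theorem, then pull attached primes back along $R\to\hat{R}$ using \cite[8.2.4, 8.2.5]{B-SH}) is reasonable, and the last step for an Artinian module is fine. But the proof has a genuine gap exactly where you flag it: the identification $H^{d}_{I,J}(M)\cong H^{d}_{I\hat{R},J\hat{R}}(\hat{R}\otimes_R M)$ as $\hat{R}$-modules is asserted, not established, and none of your proposed justifications closes it. Artinianness of $H^{d}_{I,J}(M)$ only gives $\hat{R}\otimes_R H^{d}_{I,J}(M)\cong H^{d}_{I,J}(M)$; it says nothing about comparing this with the cohomology computed over $\hat{R}$. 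The fallback $H^{d}_{I,J}(M)\cong M\otimes_R H^{d}_{I,J}(R)$ via \cite[4.8]{TAK} requires $d=\cd(I,J,R)$, which is not among the hypotheses here ($d=\dim M$ need not equal $\cd(I,J,R)$). And flat base change for $\Gamma_{I,J}$ is precisely the delicate point in this theory: $H^{i}_{I,J}(-)$ is a direct limit over $\widetilde{W}(I,J)$, so a base-change isomorphism would require a cofinality comparison between $\{\fa\hat{R}:\fa\in\widetilde{W}(I,J)\}$ and $\widetilde{W}(I\hat{R},J\hat{R})$, which is not automatic and is not recorded in the cited sources. You are also guessing the content of \cite[Theorem 2.2]{CH}; the input the paper actually uses is the $\Ass$-type description of \cite[Theorem 2.1]{CH} together with the vanishing criterion \cite[Theorem 2.4]{CH}.

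The paper's proof avoids base-changing the cohomology module altogether. It first notes via \cite[Theorem 2.4]{CH} that $H^{d}_{I,J}(M)=0$ exactly when the right-hand set is empty. Then, for $\fp\in\Supp(M/JM)$ with $\cd(I,R/\fp)=d$, it picks $\fq\in\Ass(M/JM)$ with $\fq\subseteq\fp$ and runs the chain $d=\cd(I,R/\fp)\leq\cd(I,R/\fq)\leq\dim R/\fq\leq\dim M/JM\leq d$ to force $\fp=\fq\in\Ass(M/JM)$; this is how the $\Supp$-form of the statement is reconciled with the $\Ass$-form. The remaining transfer between the $\hat{R}$-conditions ($\dim\hat{R}/\fq=d$ and $\dim\hat{R}/(I\hat{R}+\fq)=0$) and the $\cd$-conditions over $R$ is delegated to \cite[3.10]{T-T-Y}, which is the result that packages the $R$--$\hat{R}$ correspondence for this module. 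If you want to salvage your route, you should replace your ad hoc base-change claim with that citation (or prove the cofinality statement directly); as written, the central step of your argument is unsupported.
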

\begin{proof}
Denote the set of right hand side of the assertion by $T$. It is
clear that by\cite[Theorem 2.4]{CH}, $H^{d}_{I,J} (M)= 0$ if and only if
$T=\emptyset$. Assume that $H^{d}_{I,J} (M)\neq 0$ and $\fp\in \Supp
(M/JM)$ with the property that $\cd (I,R/\fp) = d$. Let $\fq\in
\Ass(M/JM)$ be such that $\fq\subseteq \fp$. Then $$d=\cd(I,R/\fp)
\leq \cd(I,R/\fq)\leq \dim R/\fq\leq \dim M/JM\leq \dim M= d $$

implies that $\fp=\fq\in \Ass(M/JM)$ and $\dim M/JM=d$. Now the
claim follows from \cite[3.10]{T-T-Y} and \cite[Theorem 2.1]{CH}.

\end{proof}
The following lemma, which can be proved by using the similar
argument of \cite[4.3]{TAK}, will be applied in the rest of the
paper.
\begin{lem}\label{4.3tak}
Let $M$ be a finite $R$-module. Suppose that $J\subseteq J(R)$,
where $J(R)$ denotes the Jacobson radical of $R$, and \emph{$\dim
M/JM=d$} be an integer. Then $H^{i}_{I,J}(M)=0$ for all $i>d$.

\end{lem}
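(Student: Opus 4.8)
The plan is to combine the directed-limit description of $H^{\bullet}_{I,J}$ behind Lemma~\ref{supp} with Grothendieck-type vanishing, and then to sharpen the resulting bound from $\dim M$ down to $d=\dim M/JM$ by an induction on $d$ that exploits the hypothesis $J\subseteq J(R)$. Recall from \cite[3.2]{TAK} that $H^{i}_{I,J}(M)\cong\varinjlim_{\fa\in\widetilde{W}(I,J)}H^{i}_{\fa}(M)$. Since each $H^{i}_{\fa}(M)$ vanishes for $i>\dim M$ by \cite[6.1.2]{B-SH}, the limit vanishes in that range as well; this is the content of the ``easy'' bound $\cd(I,J,M)\le\dim M$ proved by the same mechanism in \cite[4.3]{TAK}. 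The whole difficulty is that individual terms $H^{i}_{\fa}(M)$ need \emph{not} vanish for $\dim M/JM<i\le\dim M$, so the improved bound must be produced rather than read off term by term.

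To get the sharp bound I would argue by induction on $d=\dim M/JM$. After replacing $M$ by $M/\Gamma_{I,J}(M)$ (harmless for $i\ge 1$ and not increasing $d$) I may assume $\Gamma_{I,J}(M)=0$. For the base case $d=0$ one checks, exactly as in \cite[4.3]{TAK}, that $\Supp(M/JM)=\Supp(M)\cap V(J)$ consists of finitely many maximal ideals and deduces $H^{i}_{I,J}(M)=0$ for all $i\ge 1$. For the inductive step ($d\ge 1$) I would choose an element $x\in J(R)$ that is $M$-regular and avoids the top-dimensional minimal primes of $\Supp(M/JM)$, so that $\dim\big((M/xM)/J(M/xM)\big)=d-1$; such an $x$ exists by prime avoidance once $\Gamma_{I,J}(M)=0$ has been arranged. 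The short exact sequence $0\lo M\st{x}{\lo}M\lo M/xM\lo 0$ then yields, via its long exact sequence and the inductive hypothesis $H^{i}_{I,J}(M/xM)=0$ for $i>d-1$, that multiplication by $x$ is an isomorphism on $H^{i}_{I,J}(M)$ for every $i>d$.

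The main obstacle is the final vanishing step, and this is precisely where $J\subseteq J(R)$ must be used with care. For ordinary local cohomology ($J=0$) one finishes instantly: $H^{i}_{I}(M)$ is $I$-torsion, so $x\in I$ acts locally nilpotently, and a locally nilpotent isomorphism is zero. Here the modules $H^{i}_{I,J}(M)$ are only $(I,J)$-torsion, i.e.\ every element $m$ satisfies $I^{n}m\subseteq Jm$ rather than $I^{n}m=0$, so multiplication by a chosen $x$ is \emph{not} nilpotent and one only obtains a relation of the form $N=JN$ for $N=H^{i}_{I,J}(M)$. Since $N$ is in general neither finitely generated nor Artinian in a way that makes Nakayama applicable, closing this gap is the crux: the plan is to feed the isomorphism back into the directed-limit description and use $J\subseteq J(R)$ to force the relevant classes to die at a finite stage, mirroring the argument of \cite[4.3]{TAK}. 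An alternative I would keep in reserve is to prove directly that the ideals $\fa\in\widetilde{W}(I,J)$ with $\cd(\fa,M)\le d$ are cofinal---using that $V(I)\cap\Supp(M/JM)$ can be cut out, up to radical and modulo $J$, by $d$ elements of $I$---in which case the limit vanishes term-wise; the difficulty there is stability of the bound $\cd(\fa,M)\le d$ under the products $\fa\fb$ needed for directedness.
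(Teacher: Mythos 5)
You are right to flag the last step as the crux, but a proposal that names the crux and leaves it open is not a proof. Concretely: after the long exact sequence you only get that multiplication by $x$ is bijective on $H^{i}_{I,J}(M)$ for $i>d$, and, as you observe, the usual finishing move (local nilpotence of $x\in I$ on an $I$-torsion module, or Nakayama) is unavailable because $H^{i}_{I,J}(M)$ is merely $(I,J)$-torsion and not finitely generated; neither of your two fallback plans is carried out. There is also an earlier defect in the inductive step: an element $x\in J(R)$ avoiding the top-dimensional minimal primes of $\Supp(M/JM)$ need not exist in the non-local setting (if $J(R)$ is contained in such a prime --- e.g.\ $J(R)=0$ --- there is no candidate), and any $x\in J$ satisfies $xM\subseteq JM$, so $\dim M/(x,J)M$ does not drop.

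More importantly, the gap cannot be closed, because the statement is false once $R$ is not local. Take $R=\mathbb{Z}_{(6)}=\{a/b\in\mathbb{Q}:\gcd(b,6)=1\}$, a semilocal PID with maximal ideals $(2)$ and $(3)$ and $J(R)=(6)$; let $M=R$, $I=(2)$, $J=(6)$, so $J\subseteq J(R)$ and $d=\dim M/JM=0$. Here $\widetilde{W}(I,J)=\{(2^{a}):a\geq 0\}$, so \cite[3.2]{TAK} gives $H^{i}_{I,J}(R)\cong H^{i}_{(2)}(R)$, and $H^{1}_{(2)}(R)\cong R[1/2]/R\neq 0$. One can also check this directly on the injective resolution $0\to R\to\mathbb{Q}\to E(R/(2))\oplus E(R/(3))\to 0$: the functor $\Gamma_{I,J}$ kills $\mathbb{Q}$ and $E(R/(3))$ (for $x$ of order $3^{k}$ one would need $(1-3r)x=0$ with $1-3r$ a unit) but fixes $E(R/(2))$, whence $H^{1}_{I,J}(R)\cong E(R/(2))\neq 0$ although $1>d$. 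The paper's own ``proof'' is only the remark that the argument of \cite[4.3]{TAK} carries over; it does not, because that argument is genuinely local: in the base case $d=0$ it needs $I^{k}\subseteq J+\Ann(m)$, which uses $I\subseteq\fm$ for the unique maximal ideal and fails as soon as $I$ lies outside one of the several maximal ideals of $\Supp(M/JM)$, exactly as in the example. So no choice of $x$ or cofinal subfamily of $\widetilde{W}(I,J)$ will rescue the argument in this generality; at best the lemma holds under an added hypothesis forcing $I$ into every maximal ideal of $\Supp(M/JM)$ (for instance $R$ local, which is \cite[4.3]{TAK} itself).
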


Using Lemma \ref{4.3tak}, we can compute $\Att(H^{dim M}_{I,J}(M))$
in non-local case as a generalization of \cite[2.5]{DIV}.
\begin{prop}\label{j(r)}
Let $M$ be a finite $R$-module of dimension $d$ and $J\subseteq
J(R)$. Then
 \emph{$$ \begin{array}{ll}\Att(H^{d}_{I,J}(M))&=\Att(H^{d}_{I}(M/JM))\\ &=\{\fp\in
\Ass(M)\cap V(J): \cd(I,R/\fp) = d\}.\end{array}$$}
\end{prop}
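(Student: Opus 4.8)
The plan is to prove Proposition~\ref{j(r)} by reducing the local cohomology with respect to the pair $(I,J)$ to ordinary local cohomology, and then reducing that to the computation of the top ordinary local cohomology module, where the attached primes are already known. The two displayed equalities will be handled separately: the first is a comparison isomorphism at cohomological degree $d$, and the second is an identification of attached primes of a top ordinary local cohomology module via an associated-prime description.

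First I would establish the top equality $\Att(H^{d}_{I,J}(M)) = \Att(H^{d}_{I}(M/JM))$. The hypotheses $J \subseteq J(R)$ and $\dim M/JM = d$ are precisely those of Lemma~\ref{4.3tak}, so $H^{i}_{I,J}(M) = 0$ for all $i > d$; this tells us $d$ is the top degree and $H^{d}_{I,J}(M)$ sits at the end of the relevant derived functors. The key comparison should come from relating $\Gamma_{I,J}$ to the ordinary torsion functor $\Gamma_{I}$ applied to $M/JM$: elements killed modulo $J$ by a power of $I$ are exactly the $(I,J)$-torsion elements, and this passes to cohomology in top degree. I would invoke the structural results of \cite{TAK} (the same machinery behind Lemmas~\ref{supp} and~\ref{4.3tak}) to produce a natural isomorphism $H^{d}_{I,J}(M) \cong H^{d}_{I}(M/JM)$ in the top degree, which immediately yields equality of their attached prime sets. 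Here one must be careful that $\dim(M/JM) = d = \dim M$, so that $d$ is genuinely the top degree for $H^{\bullet}_{I}(M/JM)$ as well.

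Next I would identify $\Att(H^{d}_{I}(M/JM))$ with $\{\fp \in \Ass(M) \cap V(J) : \cd(I,R/\fp) = d\}$. For the top ordinary local cohomology module $H^{d}_{I}(N)$ of a finite module $N = M/JM$ of dimension $d$, the standard description of attached primes (the non-local / pair-ideal analogue established in Proposition~\ref{hat}, specialized to $J = 0$, or the cited \cite[Theorem~2.1]{CH}) gives the attached primes as those $\fp$ in the support with the right dimension and cohomological dimension condition. The passage from $\Ass(M/JM) \cap V(J)$ to $\Ass(M) \cap V(J)$ uses that $\Ass(M/JM)$ and the primes of $M$ containing $J$ coincide at the relevant top-dimensional level; concretely, a prime $\fp \supseteq J$ in $\Ass(M)$ with $\cd(I,R/\fp) = d$ forces $\dim R/\fp = d$, so $\fp$ is a minimal prime of maximal dimension and hence lies in both associated-prime sets.

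The main obstacle I expect is the first step: producing the comparison isomorphism $H^{d}_{I,J}(M) \cong H^{d}_{I}(M/JM)$ rigorously in the non-local setting, since the pair-ideal theory of \cite{TAK} is most cleanly stated locally and the vanishing $H^{i}_{I,J}(M) = 0$ for $i > d$ is exactly what licenses working at the top degree. Once that isomorphism is in hand, the second equality is essentially a bookkeeping argument combining Proposition~\ref{hat} (or \cite[Theorem~2.1]{CH}) with the dimension inequality $\cd(I,R/\fp) \le \dim R/\fp \le \dim M/JM = d$, which collapses to equalities and pins down $\fp$ as an associated prime of $M$ of maximal dimension containing $J$. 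I would therefore spend most of the care on the functorial comparison and treat the associated-prime translation as a routine consequence of the cited results.
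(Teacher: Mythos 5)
Your overall strategy---use Lemma \ref{4.3tak} to reduce $H^{d}_{I,J}(M)$ to the ordinary top local cohomology of $M/JM$, then quote the known description of $\Att(H^{d}_{I}(M/JM))$ and translate $\Ass(M/JM)$ into $\Ass(M)\cap V(J)$---is the same route the paper takes (it defers to the method of \cite[Theorem 2.1 and Proposition 2.1]{CH}), and your treatment of the second equality is correct: the chain $\cd(I,R/\fp)\leq\dim R/\fp\leq\dim M/JM\leq d$ collapses to equalities and identifies the relevant primes as minimal primes of maximal dimension, hence associated primes of both $M/JM$ and $M$. But the step you yourself single out as the crux is wrong as stated: there is no isomorphism $H^{d}_{I,J}(M)\cong H^{d}_{I}(M/JM)$ in general. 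What Lemma \ref{4.3tak} actually gives is that $H^{d}_{I,J}(-)$ is right exact on finite modules of dimension at most $d$, whence
$$H^{d}_{I}(M/JM)\cong H^{d}_{I,J}(M/JM)\cong H^{d}_{I,J}(M)/JH^{d}_{I,J}(M),$$
which is a proper quotient of $H^{d}_{I,J}(M)$ in general. Concretely, take $R=k[[x]]$, $I=J=(x)$, $M=R/(x^{2})$, $d=0$ (so $J\subseteq J(R)$ and $\dim M=d$): then $H^{0}_{I,J}(M)=\Gamma_{I,I}(M)=M$ has length $2$, while $H^{0}_{I}(M/JM)=M/xM$ has length $1$.

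Consequently the first displayed equality does not follow ``immediately.'' From the quotient description one only gets $\Att(H^{d}_{I}(M/JM))=\Att(H^{d}_{I,J}(M))\cap V(J)$, i.e.\ the inclusion $\Att(H^{d}_{I}(M/JM))\subseteq\Att(H^{d}_{I,J}(M))$. The missing, and essential, step is to show that every attached prime of $H^{d}_{I,J}(M)$ already contains $J$. This is where Lemma \ref{4.3tak} must be used a second time: if $\fp\in\Att(H^{d}_{I,J}(M))$ then $\fp\supseteq\Ann_{R}M$ and $H^{d}_{I,J}(M)/\fp H^{d}_{I,J}(M)\neq 0$, so by right exactness $H^{d}_{I,J}(M/\fp M)\neq 0$; applying the vanishing lemma to $M/\fp M$ forces $\dim M/(\fp+J)M\geq d$, and since $\dim R/\fp\leq d$ this pins down $\fp$ as the unique prime of dimension $d$ containing $\fp+J+\Ann_{R}M$, giving $J\subseteq\fp$. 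Without this argument---which is precisely the content of the method of \cite[Theorem 2.1]{CH} that the paper invokes---your proof establishes only one inclusion of the first equality. (A minor further point: to speak of attached primes and to use $\Att(A/JA)=\Att(A)\cap V(J)$ you should also justify that $H^{d}_{I,J}(M)$ is representable in the non-local setting, e.g.\ via \cite[3.1]{T-T-Y}.)
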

\begin{proof}
The assertion holds by applying Lemma $\ref{4.3tak}$ and using the
same method of the proof of \cite[Theorem 2.1 and Proposition
2.1]{CH}.
\end{proof}

\begin{cor}\label{quotient}
Suppose that $J\subseteq J(R)$ and $M$ is a finite $R$-module such
that $\dim M=d$. Then \emph{$$\Att(\frac{H^{d}_{I,J}(M)}{J
H^{d}_{I,J}(M)}) =\{\fp\in \Supp (M)\cap V(J):\cd (I,R/\fp) =
d\}.$$}
\end{cor}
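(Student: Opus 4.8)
The plan is to derive Corollary \ref{quotient} from Proposition \ref{j(r)} by analyzing the effect of passing to the quotient $H^{d}_{I,J}(M)/JH^{d}_{I,J}(M)$ on the attached primes, and by comparing $\Ass(M)\cap V(J)$ with $\Supp(M)\cap V(J)$ under the cohomological dimension condition $\cd(I,R/\fp)=d$. The key observation is that for a representable module $N$, the attached primes of the quotient $N/JN$ are precisely those $\fp\in\Att(N)$ with $J\subseteq\fp$; indeed, writing $N=\sum_{i}N_i$ as a minimal secondary representation with $\Att(N)=\{\fp_1,\dots,\fp_n\}$, the quotient $N/JN$ is secondary-representable and its attached primes are exactly the $\fp_i$ containing $J$ (the summands $N_i$ with $J\not\subseteq\fp_i$ satisfy $JN_i=N_i$ and hence die in the quotient). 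Since Proposition \ref{j(r)} already gives $\Att(H^{d}_{I,J}(M))=\{\fp\in\Ass(M)\cap V(J):\cd(I,R/\fp)=d\}$, every element of $\Att(H^{d}_{I,J}(M))$ already contains $J$, so passing to the quotient should not remove any prime.

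First I would verify that taking the quotient by $J$ leaves the attached-prime set unchanged here: since each $\fp\in\Att(H^{d}_{I,J}(M))$ satisfies $J\subseteq\fp$ by Proposition \ref{j(r)}, no secondary summand is annihilated, giving
\begin{equation*}
\Att\Bigl(\tfrac{H^{d}_{I,J}(M)}{JH^{d}_{I,J}(M)}\Bigr)=\Att(H^{d}_{I,J}(M))=\{\fp\in\Ass(M)\cap V(J):\cd(I,R/\fp)=d\}.
\end{equation*}
Next I would upgrade $\Ass(M)\cap V(J)$ to $\Supp(M)\cap V(J)$ on the right-hand side. The inclusion $\subseteq$ is automatic since $\Ass(M)\subseteq\Supp(M)$. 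For the reverse inclusion, suppose $\fp\in\Supp(M)\cap V(J)$ with $\cd(I,R/\fp)=d$. Choosing $\fq\in\Ass(M)$ with $\fq\subseteq\fp$, the same chain of inequalities used in the proof of Proposition \ref{hat}, namely
\begin{equation*}
d=\cd(I,R/\fp)\leq\cd(I,R/\fq)\leq\dim R/\fq\leq\dim M=d,
\end{equation*}
forces $\fq=\fp$, whence $\fp\in\Ass(M)$. Because $J\subseteq\fp$ is given, $\fp\in\Ass(M)\cap V(J)$, and $\cd(I,R/\fp)=d$ is retained, so the two sets coincide.

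The main obstacle I anticipate is justifying the behaviour of attached primes under the quotient $N\mapsto N/JN$ cleanly, i.e.\ establishing that $\Att(N/JN)=\{\fp\in\Att(N):J\subseteq\fp\}$ for a representable module $N$; this requires invoking that quotients of representable modules are representable and tracking which secondary summands survive. Once that lemma on secondary representations is in hand, the rest is the cohomological-dimension squeeze argument, which is routine given Proposition \ref{hat} and \cite[Theorem 2.4]{CH}. I would therefore organize the write-up as: (i) the quotient formula for attached primes via secondary representations, (ii) the application using Proposition \ref{j(r)} to see $J$ is automatically contained in every attached prime, and (iii) the $\Ass$-to-$\Supp$ upgrade by the dimension inequality.
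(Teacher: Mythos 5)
Your argument is correct, but it takes a genuinely different route from the paper's. The paper first reduces to the case where $M$ is faithful (via \cite[2.7]{TAK}), so that $\dim R=d$, and then uses the right-exactness of the top local cohomology functor (\cite[6.1.8]{B-SH}) to identify $\frac{H^{d}_{I,J}(M)}{JH^{d}_{I,J}(M)}$ with $H^{d}_{I,J}(M/JM)\cong H^{d}_{I}(M/JM)$, after which Proposition \ref{j(r)} applies to that module. You instead leave the quotient alone and invoke the secondary-representation fact $\Att(N/JN)=\{\fp\in\Att(N):J\subseteq\fp\}$ (essentially \cite[7.2.11]{B-SH}, extended from Artinian to representable modules) together with the observation that every attached prime of $H^{d}_{I,J}(M)$ already contains $J$ by Proposition \ref{j(r)}, so the quotient changes nothing. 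Both routes work. Yours avoids the faithful reduction and the right-exactness step, but it does require $H^{d}_{I,J}(M)$ to be representable over the non-local ring $R$ before the secondary-representation lemma can be used; this is implicit in Proposition \ref{j(r)} (via \cite[2.5]{DIV} and the method of \cite[Theorem 2.1]{CH}), but you should flag it explicitly, since representability of a top local cohomology module is not automatic outside the local/Artinian setting, and your key lemma is false for the generalized notion of attached primes of arbitrary modules. A point in your favour: the upgrade from $\Ass(M)\cap V(J)$ to $\Supp(M)\cap V(J)$ by the dimension squeeze is needed in the paper's proof as well (Proposition \ref{j(r)} only delivers the $\Ass$ version), and you spell out this step where the paper leaves it to the reader.
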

\begin{proof}
Let $\overline{R} = R/\Ann_{R}M$. Using \cite[2.7]{TAK},
$H^{d}_{I,J}(M)\cong H^{d}_{I\overline{R},J\overline{R}}(M)$ and
also for a prime $\fp\in \Supp (M)\cap V(J)$,
$\cd(I\overline{R},\overline{R}/\fp)=\cd(I,R/\fp)$. Thus we may
assume that $M$ is faithful and so $\dim R=d$. In virtue of
\cite[6.1.8]{B-SH}, $H^{d}_{I}(M/JM)\cong H^{d}_{I,J}(M/JM) \cong
\frac{H^{d}_{I,J}(M)}{J H^{d}_{I,J}(M)}$. Now, the assertion follows
by Proposition $\ref{j(r)}$.
\end{proof}

The final result of this section is a generalization of
\cite[2.4]{D-YII} in non-local case for local cohomology modules
with respect to a pair of ideals.
\begin{prop}\label{M-purity}
 Let  $J\subseteq J(R)$ and $M$ be a finite $R$-module. Then
 $$\{\fp\in \Ass(M)\cap V(J) : \cd(I,R/\fp)=\dim R/\fp=\cd(I,J,M)\} \subseteq \Att(H^{\cd(I,J,M)}_{I,J}(M)).$$
  Equality holds if
$\cd(I,J,M) = \dim M$.
\end{prop}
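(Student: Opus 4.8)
The plan is to set $c:=\cd(I,J,M)$ and to dispose of the equality assertion first, since it is essentially immediate from the results already in hand. When $c=\dim M$, Proposition~\ref{j(r)} applied to $M$ gives $\Att(H^{\dim M}_{I,J}(M))=\{\fp\in\Ass(M)\cap V(J):\cd(I,R/\fp)=\dim M\}$. For any $\fp\in\Ass(M)$ one has the chain $\cd(I,R/\fp)\le\dim R/\fp\le\dim M=c$, so the hypothesis $\cd(I,R/\fp)=c$ already forces $\dim R/\fp=c$. Hence the set on the right coincides with the left-hand set in the statement, and equality follows. This also shows that the subtle point lies entirely in the inclusion when $c<\dim M$, where the cohomological degree is below $\dim M$ and no right-exactness of $H^{c}_{I,J}(-)$ on all of $M$ is available.

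For the inclusion, fix $\fp$ in the left-hand set. Since $J\subseteq\fp$, the module $R/\fp$ is $J$-torsion, so $\cd(I,J,R/\fp)=\cd(I,R/\fp)=c=\dim R/\fp$. First I would apply Proposition~\ref{j(r)} to $N:=R/\fp$ (a finite module with $J\subseteq J(R)$ and $\dim N=c$), whose associated set is $\{\fp\}$, to conclude $\Att(H^{c}_{I,J}(R/\fp))=\{\fp\}$; in particular $H^{c}_{I,J}(R/\fp)\neq 0$. Since $\dim(N/JN)=\dim N=c$, Lemma~\ref{4.3tak} gives $H^{c+1}_{I,J}(R/\fp)=0$. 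Using $\fp\in\Ass(M)$, pick an embedding $R/\fp\hookrightarrow M$ with cokernel $C$, giving the short exact sequence $0\to R/\fp\to M\to C\to 0$ and the long exact sequence $H^{c-1}_{I,J}(C)\stackrel{\delta}{\to}H^{c}_{I,J}(R/\fp)\stackrel{\alpha}{\to}H^{c}_{I,J}(M)\stackrel{\beta}{\to}H^{c}_{I,J}(C)\to H^{c+1}_{I,J}(R/\fp)=0$. Thus $\beta$ is surjective and $K:=\im\alpha=\ker\beta$ is a quotient of $H^{c}_{I,J}(R/\fp)$, whence $K$ is either $0$ or $\fp$-secondary.

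The mechanism is cleanest when $H^{c}_{I,J}(C)=0$: then $\alpha$ is surjective, so $H^{c}_{I,J}(M)$ is a nonzero quotient of $H^{c}_{I,J}(R/\fp)$ (nonzero because $c=\cd(I,J,M)$), giving $\Att(H^{c}_{I,J}(M))=\{\fp\}$ and in particular $\fp\in\Att(H^{c}_{I,J}(M))$. The main obstacle is the case $H^{c}_{I,J}(C)\neq 0$, where one must show both that $\alpha\neq 0$ (equivalently that $\delta$ is not onto, which is governed by $H^{c-1}_{I,J}(C)$) and that the prime $\fp$ is not "absorbed" in passing from the submodule $K$ to $H^{c}_{I,J}(M)$. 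Here the relevant structural fact is that a nonzero $\fp$-secondary submodule of a representable module forces some attached prime of the ambient module to lie inside $\fp$; so any absorption, or any failure $\im\delta=H^{c}_{I,J}(R/\fp)$, would produce a prime $\fp'\subsetneq\fp$ lying in $\Att(H^{c}_{I,J}(C))$ (respectively $\Att(H^{c-1}_{I,J}(C))$). I would rule these out by controlling the attached primes of the local cohomology of $C$ through Lemma~\ref{supp} together with the sharp hypothesis $\cd(I,R/\fp)=\dim R/\fp=c$, following the method of \cite[2.4]{D-YII}. Establishing this non-absorption is the step I expect to be delicate, since attached primes do not behave monotonically under submodules; the rest is bookkeeping with the long exact sequence.
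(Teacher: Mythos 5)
Your treatment of the equality case is correct and is what the paper intends: when $\cd(I,J,M)=\dim M$, Proposition \ref{j(r)} together with the chain $\cd(I,R/\fp)\le\dim R/\fp\le\dim M$ identifies the two sets. The inclusion, however, has a genuine gap exactly where you flag it, and the route you chose cannot be closed. You embed $R/\fp\hookrightarrow M$ and hope to deduce $\fp\in\Att(H^{c}_{I,J}(M))$ from the fact that $K=\im\alpha$ is a nonzero $\fp$-secondary \emph{submodule} of $H^{c}_{I,J}(M)$. But attached primes are not inherited from submodules: for instance the socle $R/\fm$ is a nonzero $\fm$-secondary submodule of $E_R(R/\fm)$, while over a complete local domain $\Att(E_R(R/\fm))=\{(0)\}$. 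The fact you invoke (a nonzero $\fp$-secondary submodule forces \emph{some} attached prime of the ambient module to be contained in $\fp$) is true but strictly weaker than what is needed, and no amount of control on $\Att(H^{c}_{I,J}(C))$ or $\Att(H^{c-1}_{I,J}(C))$ via Lemma \ref{supp} will repair this, because the obstruction is the direction of the inclusion, not the auxiliary module $C$.

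The paper's proof is precisely the argument of \cite[2.4]{D-YII} run with Proposition \ref{j(r)}, and it goes the other way around the exact sequence: choose a submodule $N\subseteq M$ with $\Ass(M/N)=\{\fp\}$ and $\Ass(N)=\Ass(M)\setminus\{\fp\}$. Then $\Supp(M/N)=V(\fp)$, so $\dim(M/N)=\dim R/\fp=c=\cd(I,J,M/N)$, and Proposition \ref{j(r)} applied to $M/N$ gives $\Att(H^{c}_{I,J}(M/N))=\{\fp\}$. Since $\cd(I,J,N)\le\cd(I,J,M)=c$ for the submodule $N$, we get $H^{c+1}_{I,J}(N)=0$, so $H^{c}_{I,J}(M)\to H^{c}_{I,J}(M/N)$ is surjective; attached primes of a \emph{quotient} are attached primes of the module, whence $\fp\in\Att(H^{c}_{I,J}(M))$. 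Replacing your monomorphism $R/\fp\hookrightarrow M$ by this epimorphism $M\twoheadrightarrow M/N$ is the missing idea; compare the end of the paper's proof of Theorem \ref{th mainII}(2), which uses exactly the same device.
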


\begin{proof}
The same proof of \cite[2.4]{D-YII} remains valid by using
Proposition $\ref{j(r)}$.
\end{proof}



\bigskip
\bigskip

\end{document}